\theoremstyle{plain}
\newtheorem{theorem}{Theorem}[section]
\newtheorem{lemma}[theorem]{Lemma}
\theoremstyle{definition}
\newtheorem{definition}[theorem]{Definition}
\theoremstyle{remark}
\newcommand{\cf}{\mathcal{F}}
\newcommand{\F}{\mathcal{F}}
\newcommand{\C}{\mathcal{C}}
\newcommand{\vx}{f}
\newcommand{\M}{\mathcal{M}}
\newcommand{\cx}{\mathcal{X}}
\newcommand{\R}{\mathbb{R}}
\newcommand{\vab}{\nu^*_{\a,\b}}
\newcommand{\tab}{\tau^*_{\a,\b}}
\newcommand{\bk}{\overline{k}}
\renewcommand{\a}{\mathbf{a}}
\renewcommand{\b}{\mathbf{b}}
\renewcommand{\c}{\mathbf{c}}
\renewcommand{\r}{\mathbf{r}}
\newcommand{\bb}{\underline{b}}
\newcommand{\ba}{\underline{a}}
\newcommand{\one}{\mathbf{1}}
\newcommand{\rk}{\textup{rk}}
\newcommand{\rank}{\textup{rk}}
\newcommand{\supp}{\text{supp}}
\title{Choice functions in the intersection of matroids}
\date{\today}
\author{Joseph Briggs}
\address{Department of Mathematics,
Technion\\
Haifa, Israel} \email{briggs@campus.technion.ac.il}
\author{Minki Kim}
\address{Department of Mathematics,
Technion\\
Haifa, Israel} \email{kimminki@campus.technion.ac.il}
\begin{document}

\maketitle

\begin{abstract}
    We prove a common generalization of two results, one on rainbow fractional matchings \cite{AHJ} and one on rainbow sets in the intersection of two matroids \cite{KZ}:
    Given $d=r\lceil k\rceil -r+1$ functions of size (=sum of values) $k$ that are all independent in each of $r$ given matroids, there exists a rainbow set of $supp(f_i), ~i \le d$, supporting a function with the same properties. 
\end{abstract}

\section{Introduction}

Let $\cf=(F_1, \ldots ,F_m)$ be a family (namely, a multiset) of  sets.  A (partial) {\em rainbow set} for $\cf$ is 
the image of a partial choice function.
Namely, it is a
set of the form $R = \{x_{i_1}, x_{i_2},\ldots ,x_{i_k}\} $, where $1 \le i_1<i_2< \ldots <i_k\le k$, and $x_{i_j}\in F_{i_j} ~~(j \le k)$. Here it is assumed that $R$ is a set, namely that the elements $x_{i_j}$ are distinct. There are many theorems of the form ``under some conditions there exists a rainbow set  satisfying a prescribed condition''. For example, the case where the condition is being full (representing all $F_i's$) is the subject of Hall's marriage theorem. The following theorem of Aharoni and Berger \cite{AB}, which generalizes a result of Drisko \cite{drisko}, belongs to this family, and is a forefather of the results in the present paper:

\begin{theorem}\label{t.drisko}
Any family of $2k-1$  matchings of size $k$ in a bipartite graph $G$ have a rainbow matching of size $k$.
\end{theorem}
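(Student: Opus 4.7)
The plan is a proof by contradiction, via a rainbow augmenting-path argument.

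Let $R$ be a rainbow matching of maximum size $t$, using colour set $C\subseteq\{1,\ldots,2k-1\}$, and assume $t<k$. At least $2k-1-t\geq k$ matchings are \emph{free}, meaning they carry indices outside $C$. By maximality of $R$, every edge of every free matching must meet $V(R)$ (else one could directly enlarge $R$), and since $|V(R)|=2t$ this already forces $t\geq\lceil k/2\rceil$. A sharper count distinguishes \emph{pendant} edges of a free matching (meeting $V(R)$ at exactly one vertex) from \emph{internal} edges (meeting $V(R)$ at both vertices): the bipartite matching structure gives at most $2t-k$ internal edges and at least $2(k-t)$ pendant edges in each free matching.

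To augment, I would seek an \emph{alternating rainbow path}: a sequence $f_0,\,e_{i_1},\,f_1,\,e_{i_2},\,\ldots,\,e_{i_s},\,f_s$ of edges where each $f_p$ is a pendant or internal edge of a free matching $M_{j_p}$ with the $j_p$'s all distinct, each $e_{i_q}$ is an edge of $R$, consecutive edges share a vertex, and the initial endpoint of $f_0$ and the terminal endpoint of $f_s$ both lie outside $V(R)$. Swapping along such a path (removing the $e_{i_q}$ from $R$ and inserting the $f_p$, each carrying a distinct new colour) yields a rainbow matching of size $t+1$, contradicting maximality of $R$.

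The existence of such a path is to be established by a BFS-style exploration starting from a free endpoint of some pendant edge, coupled with a pigeonhole argument: the set of visited edges of $R$ can grow through at most $t<k$ steps, while we have at least $k$ distinct free labels to draw from, so the exploration must close out at another free endpoint before exhausting labels. The main obstacle is coordinating the distinct-label constraint with the alternating structure, i.e., choosing the labels $j_p$ to remain pairwise distinct along the path while simultaneously maintaining the matching property after the swap. This is exactly where the tight count $2k-1$ enters: with only $2k-2$ matchings the conclusion fails, as classical Latin-square constructions demonstrate, so the pigeonhole is razor-thin and any weaker counting would miss it.
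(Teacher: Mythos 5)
Your setup is fine as far as it goes: the maximality of $R$, the fact that every edge of a free matching must meet $V(R)$, and the pendant/internal counts are all correct. But the proof stops exactly where the theorem lives. The existence of the rainbow alternating augmenting path is only asserted, and the ``BFS-style exploration plus pigeonhole'' is not an argument: comparing ``at most $t<k$ visited $R$-edges'' with ``at least $k$ free labels'' does not force the exploration to reach a second vertex outside $V(R)$. Concretely, three things can go wrong, none of which is addressed: (i) standing at the far endpoint of an $R$-edge on the path, the free matchings whose labels are still unused may cover that vertex only by internal edges, or not cover it at all, so there may be no admissible continuation even though many labels remain; (ii) the free edges $f_0,\dots,f_s$ come from different matchings, so they need not be pairwise disjoint --- two pendant edges of different free matchings can share their endpoint outside $V(R)$, in which case the swap does not produce a matching; (iii) nothing prevents the exploration from needing an $R$-edge already used on the path. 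You flag the label-coordination issue yourself as ``the main obstacle,'' and that self-assessment is accurate: resolving it is essentially the entire content of the theorem. Precisely because $2k-1$ is tight, no one-shot pigeonhole on labels versus $R$-edges can work; the known proofs (Aharoni--Berger, or Drisko's Latin-rectangle argument) run a careful sequential construction --- one fresh unused matching per step of an alternating path, with an invariant on where the path may end --- and the bound $2k-1$ emerges from that bookkeeping, not from a single count.

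For comparison, the paper does not prove Theorem \ref{t.drisko} directly at all: it quotes it from \cite{AB}, and later rederives it as a special case of Theorem \ref{t.main} by taking $\M_1,\M_2$ to be the partition matroids of stars on the two sides of the bipartite graph and invoking integrality of the two-matroid intersection polytope (Theorem \ref{2matroids}). So a completed augmenting-path argument would indeed be a genuinely different and more elementary route than the paper's; as written, however, the crucial step is missing, so the proposal is a plan rather than a proof.
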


(Drisko's slightly narrower result was formulated in the language of  Latin rectangles.) In \cite{gendrisko} it was conjectured that almost the same is true in general graphs, namely that in any graph $2k$ matchings of size $k$ have a rainbow matching of size $k$,  and that for odd $k$ the Drisko bound suffices - $2k-1$ matchings of size $k$ have a rainbow matching of size $k$. This is far from being solved (in \cite{gendrisko} the bound $3k-2$ was proved), but in \cite{AHJ} a fractional version of the conjecture was proved, in a more general setting. Recall that $\nu^*(F)$ denotes the largest total weight of a fractional matching in a hypergraph $H$.

 \begin{theorem}[Aharoni, Holzman and Jiang \cite{AHJ}]\label{t.hypermatchings}
 %Let $ r\geq 2, k \geq 1$ be integers. 
 Let $m$ be a real number,  let  $H$ be an $r$-uniform hypergraph and let $q \ge {\lceil rk\rceil }$ be an integer. Then any family $E_1,...,E_q$ of sets of edges in $H$  satisfying $\nu^*(E_j) \geq k$ for all
 $j \le q$ has a rainbow set $F$ of edges with $\nu^*(F) \ge k$. If $H$ is $r$-partite then it suffices to assume that $q \ge r \lceil k\rceil-r+1$ to obtain the same conclusion. 
 
 \end{theorem}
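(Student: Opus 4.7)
My plan combines a linear-programming sparsification of each $E_j$ with Bárány's Colorful Carathéodory theorem.

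\textbf{Step 1 (Sparsification).} For each $j \le q$, since $\nu^*(E_j) \ge k$, the polytope $P_j = \{ f \in \R^{E_j}_{\ge 0} : \sum_{e\ni v} f(e)\le 1\ \forall v\in V(H),\ \sum_e f(e) = k\}$ is nonempty; I take $f_j$ to be a vertex of $P_j$. A standard LP argument bounds $|\supp(f_j)|$ by the number of vertex constraints of $H$ that are tight at $f_j$; combining this with the identity $\sum_{v\in V(H)}\sum_{e\ni v} f_j(e) = r\|f_j\|_1 = rk$ yields $|\supp(f_j)| \le d$, where $d = \lceil rk\rceil$ in the $r$-uniform case and $d = r\lceil k\rceil - r + 1$ in the sharper $r$-partite case (where the vertex constraints split into $r$ parts, saving one edge per part).

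\textbf{Step 2 (Colorful selection).} Now $f_1,\ldots,f_q$ are weight-$k$ fractional matchings, each supported on $\le d$ edges of its own family $E_j$, and by hypothesis $q \ge d$. Consider the polytope $K$ of fractional matchings of $H$ of weight $= k$, and the ``colored'' faces $K_j = \{f \in K : \supp(f)\subseteq \supp(f_j)\}$; each $K_j$ contains $f_j$ and has dimension $\le d-1$. I would apply Colorful Carathéodory in $K$ (restricted to a suitable $(d-1)$-dimensional affine span, using as common point an average of the $f_j$'s or a centroid of $K$) to extract a rainbow selection $\{e_{j_i} \in \supp(f_{j_i})\}_{i=1}^{m}$ of distinct edges whose convex hull in $K$ contains a point, namely a fractional matching of weight $k$ supported on that rainbow set, as required.

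\textbf{Main obstacle.} The central difficulty is calibrating Step 2 so that (i) a genuine ``common point'' of the $K_j$ (or an appropriate polytopal generalization) exists to feed into Colorful Carathéodory, (ii) the resulting rainbow convex combination is honestly a fractional matching, i.e., the vertex constraints $\sum_{e\ni v}f(e)\le 1$ are preserved, not merely the total weight, and (iii) the ambient dimension aligns with the sparsification bound $d$, so that $q \ge d$ is exactly the threshold needed. The $r$-partite sharpening depends on a finer vertex analysis of the matching polytope exploiting the independence of the $r$ part-constraints; this is where the gap between $\lceil rk\rceil$ and $r\lceil k\rceil - r + 1$ is recovered, and it is the most delicate part of the argument.
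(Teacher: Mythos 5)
Your Step 2 is not a proof but a restatement of the difficulty, and the way you propose to fill it cannot work as stated. Colorful Carath\'eodory requires a single point $p$ lying in the convex hull of \emph{every} color class; here there is no candidate for $p$. The average of the $f_j$'s (or a centroid of $K$) has no reason to lie in any one $K_j$, i.e.\ to be supported inside $\supp(f_j)$, so the hypothesis of the theorem is simply not met. Worse, the objects you would feed to Colorful Carath\'eodory are unclear: if the color classes are the edges of $\supp(f_j)$ (say as the scaled indicators $k\chi_e$, to land in the weight-$k$ affine slice), then a rainbow convex combination controls only the total weight, and for $k>1$ it typically violates the vertex constraints $\sum_{e\ni v}f(e)\le 1$ -- your own point (ii). This is not a technical calibration issue: the relevant property of a set $F$ of edges is ``$F$ supports \emph{some} fractional matching of weight $\ge k$,'' which is a statement about the face $\{f: \supp(f)\subseteq F\}$ of the matching polytope, not about a fixed point lying in a convex hull, so no direct Carath\'eodory/Helly argument in the convex category applies. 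The known proofs (Aharoni--Holzman--Jiang, and the matroidal generalization in this paper) replace convexity by topology: one shows the complex $\cx_k=\{W : \nu^*(W)<k\}$ is $(r\lceil k\rceil-r)$-collapsible (Theorem \ref{t.specific}), using LP duality, a perturbation making the dual optimum unique, and a counting argument on tight constraints (chains of tight sets), and then applies the Kalai--Meshulam theorem (Theorem \ref{t.km}) to extract the rainbow set. The numerology $q\ge d$ matching ``$d$ colors in dimension $d-1$'' is suggestive but is exactly what the topological colorful Helly machinery is needed to justify; also recall the integral statement is false for $r>2$, which rules out any argument that selects single edges by a naive convex-position count.

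Your Step 1 is closer to the real argument (bounding supports/tight constraints by summing the vertex inequalities against $r$-uniformity is indeed how the quantity $r\lceil k\rceil-r$ arises, and it is the analogue of claim $(\clubsuit)$ here), but it too has gaps: at the integer boundary the vertex count gives $|\supp(f_j)|\le \lfloor rk\rfloor+1$, which exceeds $\lceil rk\rceil$ when $rk\in\mathbb{Z}$ unless you argue that the weight constraint is then dependent on the tight vertex constraints; and the $r$-partite saving of ``one edge per part'' is asserted, not proved -- in the collapsibility proof this saving comes from extracting, for each of the $r$ parts separately, a chain of tight sets and using integrality of the ranks (here: of the star degrees), which is precisely the content of Lemma \ref{chain} and the computation following Lemma \ref{l.unionclosed}. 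So the sparsification alone, even if repaired, does not substitute for the missing colorful step.
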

 
 Drisko's theorem is a special case, since in bipartite graphs $\nu^*=\nu$. The integral version of the theorem is false for $r>2$. 
 For $r=3, k=2$, for example, $rk-r+1=4$, and the four matchings of size $2$ in the complete  $2 \times 2 \times 2$ $3$-partite hypergraph do not have a rainbow matching of size $2$,  showing that $4$ matchings of size $2$ do not necessarily have a rainbow matching of size $2$. In \cite{alon, sudakov} bounds are studied in the integral case, in particular showing a  lower bound exponential in $r$. 
 
 Kotlar and Ziv proved a matroidal generalization of Theorem \ref{t.drisko}:

\begin{theorem}[Kotlar and Ziv \cite{KZ}]\label{t.kz}
Let $\M_1, \M_2$ be two matroids on the same vertex set $V$. Then any $2k-1$ sets $E_1, E_2, \dots, E_{2k-1}$ of size $k$ in $\M_1 \cap \M_2$ have a rainbow set of size $k$ belonging to $\M_1 \cap \M_2$.
\end{theorem}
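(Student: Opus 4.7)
My plan is to proceed by induction on $k$, combining an extremal choice of a partial rainbow common independent set with the exchange axiom applied in each of the two matroids. The base case $k=1$ is immediate since any element of $E_1$ is a rainbow common independent set of size $1$.

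For the inductive step, let $R$ be a rainbow common independent set of maximum size $\ell$, drawn from the sets indexed by $I\subseteq\{1,\dots,2k-1\}$, and set $J=\{1,\dots,2k-1\}\setminus I$. If $\ell\ge k$, any $k$-element subset of $R$ is the required rainbow set, so assume $\ell\le k-1$, whence $|J|\ge k$. The maximality of $R$ forces, for each $j\in J$ and each $e\in E_j$, the set $R+e$ to be dependent in at least one of $\M_1,\M_2$. Since $|E_j|=k>\ell=|R|$, the exchange axiom applied to the pair $(R,E_j)$ in $\M_1$ and in $\M_2$ supplies elements $e_j^1,e_j^2\in E_j$ with $R+e_j^1$ independent in $\M_1$ and $R+e_j^2$ independent in $\M_2$; by the previous sentence $R+e_j^1$ must fail in $\M_2$ and $R+e_j^2$ must fail in $\M_1$, so $e_j^1\ne e_j^2$.

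From here the plan is to construct an augmenting configuration that either directly extends $R$ or reduces to a smaller instance. I would mimic the augmenting-walk argument behind Edmonds' matroid intersection theorem: form an exchange digraph on $R\cup\{e_j^i:j\in J,\,i=1,2\}$ whose arcs record the $\M_1$- and $\M_2$-circuits created by adding the $e_j^i$ to $R$, and then either extract a rainbow augmenting walk from some $e_j^2$ through $R$ to some $e_{j'}^1$ (contradicting maximality of $R$), or use the absence of such a walk to locate an element $v\in R$ whose contraction leaves at least $2k-3$ of the $E_j$ ($j\in J$) as size-$(k-1)$ common independent sets in $\M_1/v\cap\M_2/v$; the inductive hypothesis then supplies a rainbow size-$(k-1)$ set which, reunited with $v$, contradicts the maximality of $|R|$.

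The main obstacle I foresee is the simultaneous bookkeeping across the two matroids: a swap that helps in $\M_1$ may destroy $\M_2$-independence, and the classical matroid-intersection augmenting paths do not a priori respect the rainbow color constraint. To overcome this I expect to need a refined extremal choice of $R$—for instance, lexicographic maximality among maximum rainbow common independent sets, or minimality of $\sum_{i\in I} i$—so that the local exchange structure produced by the $|J|\ge k$ unused $E_j$ leaves no room for conflicting swaps, and the reduction (either to size $k-1$ via contracting $v\in R$, or by substituting an $E_j$ with a basis exchange inside $E_j\cup R$) proceeds with all surviving sets of the correct size and common independence.
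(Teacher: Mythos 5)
Your proposal is a plan rather than a proof: the decisive step is missing. After the (correct) setup --- a maximum rainbow common independent set $R$ of size $\ell\le k-1$, at least $k$ unused colours $j\in J$, and for each such $j$ elements $e_j^1,e_j^2\in E_j$ with $R+e_j^1\in\M_1\setminus\M_2$ and $R+e_j^2\in\M_2\setminus\M_1$ --- the entire content of the theorem lies in the dichotomy you describe in the third paragraph, and you do not carry it out. In particular, the fallback branch is unjustified and, as stated, does not work: if no rainbow augmenting walk exists, you assert there is some $v\in R$ whose contraction leaves at least $2k-3$ of the sets $E_j$, $j\in J$, as size-$(k-1)$ common independent sets of $\M_1/v\cap\M_2/v$. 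But $E_j$ is independent in $\M_i/v$ only if $E_j\cup\{v\}$ is independent in $\M_i$ (and $E_j$ has size $k$, not $k-1$, so you must also specify which element of each $E_j$ to discard and why the discarded elements leave common independence intact). Nothing in the exchange structure you set up guarantees either of these. Moreover $|J|\ge k$ gives only $k$ candidate sets, not $2k-3$, so you would have to draw on the used colours $I$ as well, which re-entangles the colour bookkeeping you were trying to avoid. You explicitly flag this obstacle yourself (``a swap that helps in $\M_1$ may destroy $\M_2$-independence \dots the classical matroid-intersection augmenting paths do not a priori respect the rainbow colour constraint'') and propose only that some refined extremal choice of $R$ ``should'' resolve it; that is precisely the part that needs a proof.

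For comparison, the paper does not prove this statement by a direct augmenting argument at all: it deduces it from Theorem \ref{t.main} applied to the characteristic functions $\chi_{E_i}$, obtaining a fractional $f\in P(\M_1)\cap P(\M_2)$ with $|f|\ge k$ supported on a rainbow set, and then rounds $f$ to an integral vertex using Theorem \ref{2matroids} (Edmonds' integrality of the two-matroid intersection polytope). If you want a self-contained combinatorial proof you are essentially reproving Kotlar--Ziv, whose argument is genuinely delicate; your sketch identifies the right difficulty but does not overcome it.
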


Theorem \ref{t.drisko}  is obtained by taking $\M_1$ and $\M_2$ to be the two partition matroids whose parts are (respectively) the stars in the two sides of the bipartite graph.   

The aim of this paper is to prove a matroidal generalization of the $r$-partite case of Theorem  \ref{t.hypermatchings}, along the lines of Theorem \ref{t.kz}. By way of apology, most of the ideas are not new: the course of the proof follows closely that of Theorem \ref{t.hypermatchings}. But there are points where the matroidal version poses its peculiar difficulties. In particular, in order for a perturbation argument used in the proof of Theorem \ref{t.hypermatchings} to be adapted to the matroidal case, we need to  invoke  some properties of matroids and of submodular functions. These appear in Lemmas  \ref{l.manypds}, \ref{chain},   \ref{l.unionclosed},  and in Theorem \ref{l.inequality}. These are possibly of some independent interest.

To formulate the main result, we need a matroidal generalization of the notion of fractional matchings. This involves the familiar notion of  \emph{matroid polytopes}. %To formulate it we switch from function notation to vector notation.
For a function $\vx$ on a set $V$ and a subset $A$ of $V$, let $\vx[A]=\sum_{a
\in A}\vx(a)$. We denote the total size of $\vx$, namely $\vx[V]$, by $|\vx|$.
%We shall usually use function notation for vectors.  
 
\begin{definition}\cite{SCHRIJVER}
Let $\M$ be  a matroid on a ground set $V$. 
The polytope of $\M$, denoted by $P(\M)$,
is $$\{\vx \in \R_+^V \mid  \vx[A] \le \rank_\M A ~\text{ for~ every}~ A \subseteq V\}.$$
 \end{definition} 
 
 Edmonds \cite{ed2} proved that all vertices of $P(\M)$ are integral, and that this is true also for the intersection of two matroids. 
 
 \begin{theorem}[\cite{SCHRIJVER}] \label{2matroids}
 If $\M_1, \M_2$ are matroids on the same ground set, then the vertices of the polytope $P(\M_1) \cap P(\M_2)$ are integral.
 \end{theorem}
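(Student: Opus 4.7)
The plan is to prove integrality of the vertices via a standard submodular uncrossing argument, which in the end reduces the problem to a system whose coefficient matrix is the vertex-edge incidence matrix of a bipartite graph, and hence totally unimodular.

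Fix a vertex $x^*$ of $P(\M_1) \cap P(\M_2)$ and let $V' = \supp(x^*)$. By restricting everything to $V'$ we may assume $x^*$ is strictly positive, so that none of the nonnegativity constraints is tight. Consequently $x^*$ is determined by the system of tight constraints $x[A] = \rank_{\M_i} A$ for $A$ in the tight family $\T_i := \{A \subseteq V' : x^*[A] = \rank_{\M_i}A\}$ ($i = 1,2$), and the span of the corresponding indicator vectors $\{\mathbf{1}_A : A \in \T_1 \cup \T_2\}$ must have dimension $|V'|$.

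Next I would prove that each $\T_i$ is a sublattice of $2^{V'}$, i.e.\ closed under union and intersection. This is the classical submodular uncrossing step: for any $A,B \in \T_i$,
$$\rank_{\M_i}(A\cup B) + \rank_{\M_i}(A\cap B) \le \rank_{\M_i} A + \rank_{\M_i} B = x^*[A] + x^*[B] = x^*[A\cup B] + x^*[A\cap B] \le \rank_{\M_i}(A\cup B) + \rank_{\M_i}(A\cap B),$$
forcing equality throughout. From this lattice structure one extracts a chain $\emptyset = A_0^{(i)} \subsetneq A_1^{(i)} \subsetneq \cdots \subsetneq A_{p_i}^{(i)} = V'$ in $\T_i$ whose indicator vectors span the same subspace as $\{\mathbf{1}_A : A \in \T_i\}$. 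The two chains together must still have rank $|V'|$ and so continue to pin down $x^*$.

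Finally, each chain induces a partition of $V'$ into its ``gaps'' $B^{(i)}_k := A^{(i)}_k \setminus A^{(i)}_{k-1}$, and the tight equations become, equivalently,
$$x\bigl[B^{(i)}_k\bigr] \;=\; \rank_{\M_i}\bigl(A^{(i)}_k\bigr) - \rank_{\M_i}\bigl(A^{(i)}_{k-1}\bigr) \quad \in \mathbb{Z},\qquad i=1,2,\ 1\le k\le p_i.$$
The coefficient matrix has a $\{0,1\}$ column for each $v \in V'$ containing exactly one $1$ from each partition, i.e.\ it is the vertex-edge incidence matrix of a bipartite graph whose sides are the two partitions of $V'$. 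Such matrices are totally unimodular, so the unique solution $x^*$ to this integer right-hand side system is integral.

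The main obstacle is the uncrossing/chain-extraction step: one must be careful that replacing $\T_i$ by a single chain does not lose too much linear-algebraic information, so that the two chains still determine $x^*$ uniquely. The bipartite total-unimodularity at the end is routine once the chain description is in place.
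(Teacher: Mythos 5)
The paper does not actually prove Theorem~\ref{2matroids}: it is quoted from Schrijver's book and noted to be a corollary of Edmonds' matroid intersection (min--max) theorem. Your proposal instead gives the direct polyhedral proof, and it is essentially correct and complete at the level of a sketch: restricting to the support so that only rank constraints are tight, uncrossing each tight family into a lattice, replacing each lattice by a chain, and observing that the resulting ``two interleaved partitions'' system is the incidence matrix of a bipartite graph and hence totally unimodular. The one step you flag as delicate --- that passing to chains loses no linear-algebraic information --- is in fact automatic: if $\{\chi_A : A \in \T_i\}$ spans a $t_i$-dimensional space and the lattice $\T_i$ contains a chain of length $t_i$ (which is exactly what the paper's own Lemma~\ref{chain} guarantees, and Lemma~\ref{l.unionclosed} is the uncrossing step in the same generality), then the chain's $t_i$ independent indicator vectors already span that whole space. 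Two small cosmetic corrections: the top element of a chain need not be $V'$ (tightness of $V'$ is not guaranteed), so the gaps $B^{(i)}_k$ need only cover part of $V'$; but a $0,1$ matrix in which each column has \emph{at most} one $1$ in each of the two row classes is still totally unimodular, so nothing breaks. Compared with the paper's (implicit) route through Edmonds' intersection theorem, your argument is self-contained and, pleasingly, uses the very same uncrossing-and-chain machinery that the paper develops later for Theorem~\ref{t.general}; what it does not give you is the min--max formula that Edmonds' theorem provides alongside integrality.
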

   This is a corollary of another theorem of Edmonds, the two matroids intersection theorem \cite{ed2}. 

Our main result is:

%@@ I don't think we obtain $d=\lfloor rk\rfloor -r+1$, but the slightly weaker $d=r\lceil k\rceil -r+1$.
\begin{theorem}\label{t.main}
Let $\M_1, \ldots ,\M_r$
be matroids on the same ground set $V$, and let $k$ be a real number. Let $d=r\lceil k\rceil -r+1$. Let $f_1, \ldots ,f_d$ be non-negative real valued functions belonging to $\bigcap_{i\le r}P(\M_i)$, satisfying  $|f_j|\ge k$ for every $j \le d$. Let $F_i=supp(f_i), ~i \le d$. Then there exists a function  $f \in \bigcap_{i\le r}P(\M_i)$ such that $\supp(f)$ is  a rainbow set of $(F_1, \ldots,F_d)$, and $|f| \ge k$.
\end{theorem}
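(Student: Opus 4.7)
The plan is to adapt the argument of Aharoni--Holzman--Jiang for Theorem~\ref{t.hypermatchings} to the matroidal setting, using the structural lemmas on matroids and submodular functions advertised in the introduction. I would argue by contradiction: fix a counterexample minimising a natural parameter (such as $d$, or $|V|$, or $|\bigcup_j F_j|$), and among all admissible candidates let $f \in \bigcap_{i \le r} P(\M_i)$ be a function whose support is a partial rainbow set of $(F_1,\dots,F_d)$ and which maximises $|f|$. By assumption $|f| < k$, and the goal is to derive a contradiction by strictly augmenting $f$.

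The next step is to analyse the tight constraints at $f$. For each matroid $\M_i$ let $\T_i = \{A \subseteq V : f[A] = \rank_{\M_i}(A)\}$. By submodularity of the rank function each $\T_i$ is a lattice under unions and intersections, and the matroid lemmas (\ref{l.manypds}, \ref{chain}, \ref{l.unionclosed}) would be invoked to extract refined structural information about how the tight sets of different matroids interact. Armed with this, I would try to augment $f$ along one of the given functions $f_j$: since $|f_j| \ge k > |f|$ and $f_j \in \bigcap_i P(\M_i)$, the convex combination $f + \epsilon(f_j - f)$ remains in $\bigcap_i P(\M_i)$ and has larger total size, but its support need not be rainbow, so one must instead push weight only onto a single carefully chosen element $v \in F_j$ lying outside every tight set, with a compensating redistribution of weight within $\supp(f)$ governed by the tight-set lattices. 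Theorem~\ref{l.inequality} would then be used to bound the number of indices $j$ for which no viable $v \in F_j$ exists; the hypothesis $d = r\lceil k \rceil - r + 1$ is precisely the pigeonhole threshold that forces at least one surviving index $j$, and augmenting $f$ at that $j$ contradicts maximality.

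The main obstacle is the perturbation step. In the hypergraph setting of Theorem~\ref{t.hypermatchings}, the $r$-partite structure furnishes a canonical decomposition of any fractional matching into $r$ ``side'' contributions, and augmentations can be performed one side at a time in a transparent way. In the matroidal setting this clean decomposition disappears and must be replaced by a simultaneous analysis of the (potentially much richer) tight-set lattices of the $r$ matroids. Showing that these lattices are ``small enough'' in the precise sense needed to run the pigeonhole is, I expect, the technical heart of the argument, and is where Edmonds' two-matroid intersection (Theorem~\ref{2matroids}) together with the new submodular inequality Theorem~\ref{l.inequality} will be crucial.
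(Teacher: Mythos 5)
Your proposal takes a genuinely different route from the paper, and it has a gap at its central step. The paper does not argue by augmenting a maximal rainbow-supported $f$; it reduces Theorem~\ref{t.main} to a topological statement (Theorem~\ref{t.specific}: the complex $\cx_k$ of sets $W$ with $\nu^*(W)<k$ is $(r\lceil k\rceil-r)$-collapsible) and then invokes the Kalai--Meshulam theorem (Theorem~\ref{t.km}) to convert collapsibility into the rainbow conclusion. This is also what \cite{AHJ} does in the $r$-partite hypergraph case -- your description of their method as ``augmenting one side at a time'' is not accurate; the $r$-partite structure enters only through the partition matroids, and the heavy lifting is topological. The lemmas you cite (Lemma~\ref{chain}, Lemma~\ref{l.unionclosed}, Theorem~\ref{l.inequality}) are indeed used to analyse tight-set lattices at an LP optimiser, but inside the collapsibility proof: they bound the size of an inclusion-minimal critical face $W$ by $r\lfloor\bk\rfloor$, which certifies that removing the faces above $W$ is a legal $(r\lceil k\rceil-r)$-collapse. (Theorem~\ref{2matroids} plays no role in the proof of Theorem~\ref{t.main}; it is only used afterwards to derive the integral Kotlar--Ziv theorem.)

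The gap is the pigeonhole step. You assert that Theorem~\ref{l.inequality} ``would be used to bound the number of indices $j$ for which no viable $v\in F_j$ exists'' by $r\lceil k\rceil-r$, but no mechanism is given, and this is exactly the hard content of the theorem. The tight-set analysis at a maximiser $f$ is local: it tells you about $\supp(f)$ and which constraints are active there, but it does not by itself control how many of the $d$ colour classes are ``blocked,'' because an augmentation that preserves the rainbow property may require globally re-assigning which $F_j$ each support element represents, not just perturbing $f$. No direct augmentation proof of this type is known even for Theorem~\ref{t.hypermatchings} (and the failure of the integral statement for $r>2$ suggests such exchanges are genuinely delicate). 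The paper supplies the missing global step via the Kalai--Meshulam theorem, and the collapsibility proof itself is not a one-shot argument: it is an induction on $|\cx_{\a,\b,k}|$ requiring a perturbation of the objective vector $\a$ after each collapse (claim $(\spadesuit)$) and a genericity perturbation of the $\b^i$'s to make the dual optimum unique (which is what Lemma~\ref{l.manypds} is for). To repair your plan you would either need to prove the pigeonhole bound directly -- which would be a new argument -- or reintroduce the topological reduction.
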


 Theorem \ref{t.kz} follows. Let $E_i,~ i \le 2k-1$ be sets as in that theorem. Applying Theorem \ref{t.main} to the functions $\chi_{E_i}, ~i \le 2k-1$ (here and below $\chi_S$ is the characteristic function of the set $S$), yields a function  $f \in P(\M_1) \cap P(\M_2)$ with $|f| \ge k$ whose support is a rainbow set for the $E_i$'s. The function $f$ is a convex combination of vertices of  $P(\M_1) \cap P(\M_2)$, and since in this combination all coefficients are positive, the supports of these vertices are contained in $\supp(f)$. Among these there is at least one vertex $g$ with $|g| \ge |f|$. By Theorem  \ref{2matroids} $g$ is integral, namely a $0,1$ function, meaning that it is a characteristic function of a set as  in the conclusion of Theorem \ref{t.kz}.

  To obtain the $r$-partite case of Theorem \ref{t.hypermatchings} from  Theorem  \ref{t.main}, choose the matroids $\M_i, ~i \le r$ to be the partition matroids on $\bigcup_{i \le d}E_i$ defined by the stars in the $i$-th side $V_i$ of the hypergraph. Namely, a set is independent in $\M_i$ if it does not contain two edges meeting in $V_i$. Then a function belongs to $\bigcap_i P(\M_i)$ if and only if  it is a  fractional matching. The condition  $\nu^*(E_j) \geq k$ means that there exists a fractional matching $f_j \in \bigcap_i P(\M_i)$ with $\supp(f_j) \subseteq E_j$ and $|f_j| \ge k$ ($j \le d$).   
Applying Theorem \ref{t.main} then yields a fractional matching $f$ whose support is rainbow with respect to the sets $E_j$.

\vspace{1.5mm}
%We close the introduction with a remark. Just as there is no characterisation akin to Edmond's \cite{Ed} for the intersection of $\geq 3$ matroid polytopes, so the natural generalisation of Theorem \ref{t.kz} for $r \geq 3$ matroids does not hold, replacing $2k-1$ with $rk-r+1$. In this way, Theorem \ref{t.main} can be viewed as the true generalisation of Theorem \ref{t.kz}.

%The following example shows that for $m \ge 3$ switching to the fractional version is necessary, namely the integral version does not hold. 

%\begin{example}
 %Consider the 3 partition matroids $\M_1, \M_2, \M_3$ on $V=[2]^3$ given by the coordinate bisections, whereby $\{x_1 x_2 x_3,y_1 y_2 y_3\}$ is an independent set in $\M_i$ if and only if $x_i \neq y_i$.
%Then, letting $k=2$, the only sets of size 2 in $\M_1 \cap \M_2 \cap \M_3$ are the $4=3k-3+1$ disjoint diagonals $\{x_1x_2x_3, (3-x_1)(3-x_2)(3-x_3)\}$, which in turn have no rainbow diagonal.

%\end{example}

\section{A Topological Tool}

A {\em complex} is a downward-closed collection of sets, that in this context are called {\em faces}. Let $\C$ be a complex on a vertex set $V$. A face $\sigma$ of $\C$  is called a {\em collapsor} if it is contained in a unique maximal face. The operation of removing from $\C$ all faces containing a collapsor $\sigma$ is then called a {\em collapse}, and if $|\sigma| \le d$ then it is called
a $d$-{\em collapse}. We say that $\C$ is \emph{$d$-collapsible} if it can be reduced to $\emptyset$ by a  sequence of  $d$-collapses.
Wegner~\cite{Wegner} observed that a $d$-collapsible complex is \emph{$d$-Leray}, meaning that the homology groups of all induced complexes vanish in dimensions $d$ and higher. 

%As we will only work with Lerayness through collapsibility, we avoid the topological details here.

Our main tool will be a theorem of Kalai and Meshulam \cite{KM}. For a complex $\C$ let  $\C^c$ be the collection of all non-$\C$-faces (namely, $\C^c:=2^V \setminus \C$).

\begin{theorem}[Kalai-Meshulam \cite{KM}]\label{t.km}
If $\C$ is $d$-collapsible, then every $d+1$ sets in $\C^c$ have a rainbow set belonging to $\C^c$.
\end{theorem}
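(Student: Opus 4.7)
The plan is to proceed by induction on the number of elementary $d$-collapses required to reduce $\C$ to the void complex. The base case is the void complex itself, where $\C^c = 2^V$; then any full rainbow transversal (which exists once each $F_j$ is nonempty, the only nontrivial regime) trivially lies in $\C^c$.

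For the inductive step, let $\sigma$ be the first collapsor in some witnessing sequence, with $|\sigma| \le d$ and unique maximal face $\tau \supseteq \sigma$, and let $\C' = \C \setminus [\sigma, \tau]$, which is $d$-collapsible in one fewer collapse. Since $F_1, \ldots, F_{d+1} \in \C^c \subseteq (\C')^c$, the inductive hypothesis applied to $\C'$ yields a rainbow transversal $R = \{x_1, \ldots, x_{d+1}\} \in (\C')^c = \C^c \sqcup [\sigma, \tau]$. If $R \in \C^c$ we are done, so the only remaining case is $\sigma \subseteq R \subseteq \tau$.

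In that case the numerology $|\sigma| \le d < d+1 = |R|$ forces some $x_{j^*} \in R \setminus \sigma$. Since $F_{j^*} \notin \C$ while $\tau \in \C$, downward closure of $\C$ gives $F_{j^*} \not\subseteq \tau$, so one can pick $y \in F_{j^*} \setminus \tau$. Replacing $x_{j^*}$ by $y$ yields a rainbow transversal $R'$ with $R' \not\subseteq \tau$, and in particular $R' \notin [\sigma, \tau]$.

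The main obstacle is that this swap does not automatically place $R'$ in $\C^c$: a priori $R'$ could lie in $\C' \subseteq \C$. To close the gap I would try to iterate the swap along a carefully ordered sequence of problematic intervals, but if that purely combinatorial route does not close, the fallback is the topological framework the paper already invokes. Wegner's theorem converts $d$-collapsibility into $d$-Leray, and then a Mayer--Vietoris or colourful-nerve argument on the induced subcomplex $\C[F_1 \cup \cdots \cup F_{d+1}]$ would show that the assumption ``every rainbow transversal is a face'' forces a nonzero class in $\tilde H_{d-1}$, contradicting the vanishing of homology in that degree.
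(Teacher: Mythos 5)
Note first that the paper does not prove this statement at all: it is quoted from Kalai--Meshulam, and the paper explicitly defers the derivation (from the topological colorful Helly theorem of \cite{KM}) to \cite{AHJ}. So any complete proof you give is necessarily "different from the paper's". Unfortunately, yours is not complete: the gap you flag in your fourth paragraph is genuine and is exactly where the difficulty of the theorem lives. The inductive hypothesis applied to $\C' = \C \setminus [\sigma,\tau]$ only asserts that \emph{some} partial rainbow set $R$ lies in $(\C')^c = \C^c \cup [\sigma,\tau]$; it gives you no control over which one. When $R$ falls in the deleted interval $[\sigma,\tau]$, the swapped set $R'$ escapes that interval but may perfectly well be a face of $\C'\subseteq\C$, and nothing in the induction lets you restart or make progress from there: no potential function decreases under your swap, and "iterate along a carefully ordered sequence of problematic intervals" is not specified enough to be checkable. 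A secondary unaddressed point: the inductive hypothesis yields a \emph{partial} rainbow set, so you may not write $R=\{x_1,\dots,x_{d+1}\}$; in the bad case one can have $R=\sigma$ exactly, leaving no $x_{j^*}\in R\setminus\sigma$ to swap (one must instead adjoin a representative of an unused index, which runs into the same obstruction).

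The fallback is also not a proof. Saying that Wegner's theorem gives $d$-Lerayness and that "a Mayer--Vietoris or colourful-nerve argument" on $\C[F_1\cup\dots\cup F_{d+1}]$ would produce a nonzero class in $\tilde H_{d-1}$ is essentially restating the Kalai--Meshulam theorem, not deducing it; their proof is a genuinely nontrivial homological argument, not a routine exactness computation. The actual reduction used in \cite{AHJ} is worth knowing: pass to the disjoint union $\bigsqcup_i F_i\times\{i\}$, equip it with the rank-$(d{+}1)$ partition matroid whose parts are the $F_i\times\{i\}$, and observe that if no rainbow set lay in $\C^c$ then this matroid complex would sit inside the (still $d$-Leray) pullback of $\C$; the Kalai--Meshulam theorem then forces some face to contain an entire part, i.e.\ some $F_i\in\C$, a contradiction. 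If your goal is to avoid citing \cite{KM}, you must replace that input with an actual argument; the induction on collapses as sketched does not close.
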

% The proof of Kalai-Meshulam for collapsible complexes is independent from that for Leray complexes.

In fact, this is a special case of the main theorem in \cite{KM}. The way to derive it from the original theorem can be found in \cite{AHJ}.

We will use Theorem \ref{t.km} to reduce Theorem \ref{t.main} to a topological statement. To state this, we first extend the definition of the fractional matching number $\nu^*$ to our matroidal setting. For each $W \subseteq V$,  let 
\[
\nu^*(W):=\max \bigg\{|\vx|: \vx \in \bigcap_i P(\M_i), \; \supp(\vx) \subseteq W \bigg\}.
\]
For a positive real $k$ let $\cx_k$ be the simplicial complex of all sets $W \subseteq V$ with $\nu^*(W)<k$.

\begin{theorem}\label{t.specific}

$\cx_{k}$ is $(r \left\lceil k\right\rceil -r)$-collapsible.
%, where $\bk<k$ is the largest $\nu^*(W)$ attained in $\cx_k$
\end{theorem}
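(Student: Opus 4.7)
My plan is to adapt the collapsibility argument used for the analogous statement in $r$-partite hypergraphs (underlying Theorem~\ref{t.hypermatchings} of \cite{AHJ}) to the matroidal setting. The overall strategy is to associate to each face $W$ of $\cx_k$ a canonical ``dual certificate'' of the inequality $\nu^*(W) < k$ and use the structure of this certificate, together with a tie-breaking perturbation of the input, to produce an explicit collapsing sequence of dimension at most $r\lceil k\rceil-r$.

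By LP duality applied to the definition of $\nu^*$, for each $W\in\cx_k$ there exist nonnegative weights $y^i_A$ (indexed by $i\le r$ and $A\subseteq V$) satisfying
\[
\sum_{i\le r}\sum_{A\ni v} y^i_A \ge 1 \text{ for all } v\in W, \qquad \sum_{i,A} y^i_A\,\rk_{\M_i}(A) < k.
\]
Submodularity of each $\rk_{\M_i}$ then allows an uncrossing step (to be formalized as Lemma~\ref{chain}): whenever $A,B$ both lie in the support of $y^i$ with neither containing the other, replacing them by $A\cup B$ and $A\cap B$ with the same weight does not increase the cost. Iterating gives, for each matroid $\M_i$, a chain support $A^i_1\subsetneq\cdots\subsetneq A^i_{\ell_i}$. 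A count using the submodular inequality of Theorem~\ref{l.inequality} together with the union-closedness provided by Lemma~\ref{l.unionclosed} should then bound $\sum_i \ell_i \le r(\lceil k\rceil - 1) = r\lceil k\rceil - r$, since each chain contributes integer rank jumps summing to less than $k$.

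To implement the collapses, I would order the faces by a lexicographic rule induced by the canonical chain certificate (with ties broken by a generic perturbation of the rank functions, in the spirit of \cite{AHJ}), and for each face $W$ pick a collapsor $\sigma(W)\subseteq W$ of size at most $r\lceil k\rceil - r$, say one ``tight'' vertex per chain element lying in $W$. The critical check is that $\sigma(W)$ is contained in a unique maximal face of $\cx_k$, which reduces to showing that the canonical chain certificate is stable under passage to a maximal extension within $\cx_k$. The main obstacle I anticipate is exactly this canonicalization step: the $r$ matroidal rank constraints interact, and one must establish that the uncrossed chain decomposition is genuinely determined by the tight vertices of $\sigma(W)$, independently of which maximal face one is inside. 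The non-integrality of $\bigcap_i P(\M_i)$ for $r\ge 3$ is precisely what forces this fractional chain-based treatment rather than the simpler integral covers available in the two-matroid case.
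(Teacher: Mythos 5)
Your outline correctly identifies the right toolkit (perturbation as in \cite{AHJ}, uncrossing via submodularity, chains with strictly increasing ranks), but as written it has two genuine gaps that the paper's proof is specifically engineered to close. First, the induction does not close in the unweighted setting. After performing one collapse you must certify that what remains is still a complex of the same type, and the complex obtained from $\cx_k$ by deleting all supersets of a face $W$ is \emph{not} of the form $\{W' : \nu^*(W')<k'\}$ for the original unweighted program. The paper therefore proves the more general Theorem~\ref{t.general}, for an arbitrary objective $\a$ and skewed polytopes $P_{\b^i}(\M_i)$ with $\b^i$ PDS, precisely so that the post-collapse complex can be rewritten as $\cx_{\a',\b,\bk}$ for a perturbed objective $\a'$ (claim $(\spadesuit)$) and the induction hypothesis applied. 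Your ``lexicographic rule induced by the canonical chain certificate'' is a placeholder for this, and I do not see how to make it work without the weighted generalization. Relatedly, your proposed ``generic perturbation of the rank functions'' is not available: matroid rank functions are integer-valued and rigid. The paper instead perturbs the multipliers $b^i$ inside the full-dimensional cone of PDS functions (Lemma~\ref{l.manypds}), which is again only possible after generalizing the polytopes to $P_{\c}(\M)$; this perturbation is what makes the dual optimum unique and yields the unique-facet property $(\diamondsuit)$ that you flag as your ``critical check'' but do not resolve.

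Second, your size bound is run on the wrong side of the LP. You uncross the \emph{dual} support and claim the chain lengths are bounded because ``integer rank jumps sum to less than $k$,'' but the dual cost is $\sum_{i,A} y^i_A\,\rk_{\M_i}(A)$ with fractional weights $y^i_A$, so a long chain with small weights is not excluded by the cost bound, and in any case bounding the dual support does not directly bound $|W|$, which is what a $d$-collapse requires. The paper instead takes $W$ inclusion-minimal among faces attaining the maximum value $\bk$, lets $f$ be a \emph{primal} vertex optimizer, uses minimality to rule out tight constraints of the form $f(v)=0$, and so obtains $|W|$ linearly independent tight constraints $\sum_{v\in A}f(v)=b^i(A)\rk_{\M_i}(A)$. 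Lemma~\ref{l.unionclosed} (which needs Theorem~\ref{l.inequality}, since the tight-set family is a lattice only for the product $\c\cdot\rk$) plus Lemma~\ref{chain} then give a chain of $w_i$ tight sets with strictly increasing values of $\sum_{v\in A}f(v)$, hence strictly increasing ranks, hence $w_i\le\lfloor \bk\rfloor\le\lceil k\rceil-1$ and $|W|=\sum_i w_i\le r\lceil k\rceil-r$. You would need to restructure your argument around this primal, minimal-face mechanism for the count to go through.
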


 Theorem \ref{t.main} follows from Theorem \ref{t.specific}. Indeed, as $\cx_k$ is $(r\left\lceil k\right\rceil -r)$-collapsible, by Theorem \ref{t.km} any $r\left\lceil k\right\rceil -r  +1$ sets not in $\cx_k$ contain a rainbow set not in $\cx_k$. Since $F \not\in \cx_k$ means that some $f \in \bigcap_{i \leq r} P(\M_i)$ supported  on $F$ satisfies $|f| \geq k$, Theorem \ref{t.main} follows.
 %it is also $(r\left\lceil k\right\rceil -r)$-Leray, and so by Theorem \ref{t.km} any $r\left\lceil k\right\rceil -r  +1$ sets not in $\cx_k$ contain a rainbow set not in $\cx_k$. Since $F \not\in \cx_k$ means that some $f \in \bigcap_{i \leq r} P(\M_i)$ supported  on $F$ satisfies $|f| \geq k$, Theorem \ref{t.main} follows.

\bigskip
\section{Proof of Theorem \ref{t.specific}}

%For any subset $W$ of $V$, and positive weight vector $\a \in \R_+^V$, let $\nu_\a^*(W)$ denote the largest possible value of $ \a \cdot f $ among all $f \in \bigcap_i P(\M_i)$ satisfying $\supp(f) \subseteq W$.

%In fact, we will work in a slightly more general setting.
%The main advantage of the fractional setting is that we are able to sidestep degeneracy issues by perturbing the objective and constraints.
 
A non-negative function $\c: 2^V \rightarrow \R_+$ is said to be  \emph{decreasing} if $c(A) \leq c(A')$ whenever $A \supseteq A'$.
A non-negative function $\c: 2^V \rightarrow \R_+$ is said to be  \emph{submodular} if, whenever $A, B \subseteq V$, we have 
\[
c(A) + c(B) \geq c(A \cup B) + c(A \cap B).
\]
Note that the rank function $\rk_\M$ of a matroid $\M$ is submodular \cite{Welsh}.

%For convenience, We shall sometimes write $c(A)$ for $c(A)$.

\begin{definition}
If $\c: 2^V \rightarrow \R_+$ is decreasing  and $\M$ is a matroid on $V$, let   
$$P_\c(\M):=\{\vx \in \R_+^V \mid  \vx[A] \le c(A) \rank_\M A ~\text{ for every}~ \emptyset \neq A \subseteq V\}.$$

%Furthermore, denote by $\bar{c}>0$ the smallest value among all $c(A)$, namely $c(V)$. 

\end{definition}
Note that excluding the $A=\emptyset$ inequality does not change the polytope.

We shall use the acronym PDS for ``positive, decreasing and submodular''. As in \cite{AHJ}, we shall consider perturbations of  $\cx_k$. For this purpose, we shall need the following:
%and to do these we need to know the following:
%formulate explicitly thus:
\begin{lemma}\label{l.manypds}
The polytope $Q$ of PDS functions on $2^V$ has full dimension. Moreover, for any $b >0$, the polytope $Q \cap \{c(V)=b\}$ has full dimension (namely $2^{|V|}-1$) relative to the hyperplane $\{c(V) =b\}$, for any $b > 0$.
\end{lemma}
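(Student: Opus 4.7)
The plan is to exhibit a single PDS function $c_0$ at which every facet-defining inequality of $Q$ holds strictly; full dimension of $Q$ then follows because a small open $\varepsilon$-ball around $c_0$ in $\R^{2^V}$ must remain inside $Q$. For the hyperplane slice, the same function can be rescaled to lie in the hyperplane $\{c(V)=b\}$ while retaining strictness, and one then perturbs within the hyperplane.

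Concretely, take $c_0(A) := K - |A|^2$ for some $K > |V|^2$. Then $c_0$ is strictly positive (its minimum value is $c_0(V) = K - |V|^2 > 0$) and strictly decreasing in $|A|$, since the discrete difference satisfies $c_0(A) - c_0(A \cup \{v\}) = 2|A| + 1 \ge 1$ for every $A$ and every $v \notin A$. For submodularity, writing $i := |A \cap B|$, $p := |A \setminus B|$, $q := |B \setminus A|$, a direct computation yields
\[
c_0(A) + c_0(B) - c_0(A \cup B) - c_0(A \cap B) = 2pq,
\]
which is strictly positive whenever $A$ and $B$ are incomparable. For nested pairs (say $A \subseteq B$) the submodularity inequality reduces to $c(A) + c(B) \ge c(A) + c(B)$, an identity for \emph{every} function, hence not a genuine constraint on $Q$ at all. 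Thus every facet-defining inequality of $Q$ is satisfied strictly at $c_0$, so an open ball around $c_0$ in $\R^{2^V}$ lies inside $Q$, giving $\dim Q = 2^{|V|}$.

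For the second claim, given $b > 0$, rescale: the function $c_b := (b/c_0(V))\, c_0$ is again PDS, every facet-defining inequality is still strict at $c_b$ (positivity, monotonicity and submodularity all behave well under multiplication by the positive scalar $b/c_0(V)$), and $c_b(V) = b$ by design. Small perturbations of $c_b$ within the hyperplane $\{c(V) = b\}$ preserve strictness in each of the finitely many defining inequalities, so the hyperplane contains an open $(2^{|V|} - 1)$-dimensional ball lying inside $Q \cap \{c(V) = b\}$, which is the claimed relative full dimension.

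The only subtlety, really just a bookkeeping observation, is that the nested submodular inequalities are identities rather than genuine constraints and therefore need not (indeed cannot) be made strict; the real content of the proof is producing a function that is strictly monotone, strictly positive, and strictly submodular on \emph{incomparable} pairs, and a strictly concave function of $|A|$ (such as $K - |A|^2$) does the job.
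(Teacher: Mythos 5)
Your proof is correct and follows essentially the same route as the paper: exhibit a strictly concave function of $|A|$ (you use $K-|A|^2$, the paper uses $2|V|^2-|A|^2$) at which all nontrivial defining inequalities are strict, then rescale into the hyperplane $\{c(V)=b\}$. In fact your computation $c_0(A)+c_0(B)-c_0(A\cup B)-c_0(A\cap B)=2|A\setminus B|\,|B\setminus A|$ and your explicit remark that the submodularity ``constraints'' for comparable pairs are identities are slightly more careful than the paper's version, whose displayed identity has a sign slip and which dismisses only the case $A=B$ rather than all comparable pairs.
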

\begin{proof}
To show the first claim, let
$c(A):=2|V|^2-|A|^2$ for every $A \subseteq V$. We claim that $\c \in interior(Q)$. Clearly, $\c$ is  strictly positive and strictly decreasing. 
To show strict submodularity, 
note that if $A \neq B \subseteq V$ then
\begin{align*}
& c(A) + c(B) - c(A \cup B)- c(A \cap B)\\
=& |A \cup B|^2 + |A \cap B|^2 - |A|^2 - |B|^2 \\
=& \frac{1}{2}(|A \cup B|-|A \cap B|)^2 + 
\frac{1}{2}(|A|-|B|)^2 
 >0,
\end{align*}
(To obtain  the second equality we subtracted from both sides of the equation $\frac{1}{2} \big((|A \cup B| + |A \cap B|)^2 - (|A| + |B|)^2\big)=0$). It is not necessary to check the case  $A=B$, since in this case equality is true for any function. 

To show the second claim, let $\c':=\frac{b}{|V|^2}\c$ for $\c$ as above. Then $\c'$  maintains the strictness of all inequalities defining $Q$, and satisfies $\c'(V)=b$. 
\end{proof}

Given  an $r$-tuple $\b=(\b^1, \dots, \b^r)$  of PDS functions on $2^V$ 
and a non-negative vector $\a=(a_v)_{v \in V}$,  let $\nu^*_{\a,\b}(W)$ be the largest possible value of $\a \cdot f$ among all $f \in \bigcap P_{\b^i}(\M_i)$ with $\supp(f) \subseteq W$.
%So by removing the entries outside of $W$, equivalently
That is:
\[
\begin{array}{rlll}
        \nu^*_{\a,\b}(W) := 
        \max & \sum\limits_{v \in W} a_v  f(v) & & \\
        \text{ s.t. } & \sum\limits_{v \in A}  f(v) & \leq  b^i(A)\rk_{\M_i}(A)
%        \hfil
        & \forall A \subseteq V, \forall i \in [r], \\
        & \text{ and }f(v) & \geq 0
%        \hfil
        &  \forall v \in W.
\end{array}
\]
%@@ Ack! This isn't equivalent any more! Now, the additional inequalities for $A \not\subset W$ are not automatically satisfied for such an $f$ satsifying the above linear program...especially since, as far as things seem, these decreasing $b$'s are going to become even more restrictive one we consider sets containing $A$...
%@@ In the fractional matchings paper, they mentioned that the $\nu^*_{\a,\b}$'s main equalities were summed over all $e \ni v$. (Careful, their $e$'s were edges, which here constitute our ground set of $v$'s.) Those should really specify that the sum was only over $E' \ni e \ni v$, otherwise additional edges (that the restricted LP doesn't see) can be made easier to satisfy by allowing $f(e') <0$ for those $e'$ outside of $E'$...

By linear programming duality, $\nu^*_{\a,\b}(W)$ is equal to
\[
\begin{array}{rlll}
        \tau^*_{\a,\b}(W) := 
        \min & \sum\limits_{\substack{i \in [r] \\ A \subseteq V}} b^i(A) \rank_{\M_i}(A)  h(i,A) & \\
        \text{ s.t. } & \sum\limits_{\substack{i \in [r] \\ A \ni v}} h(i, A) \geq  a_v 
        %\hfil
        & \forall v \in W \\
        & h(i,A) \geq 0
        %\hfil
        &  \forall A \subseteq V, i \in [r]
\end{array}
\]

Given a positive real number $k$, let  $\cx_{\a,\b,k}$ be the simplicial complex consisting of all sets $W \subseteq V$ for which $\nu_{\a, \b}^*(W) < k$.

\begin{theorem}\label{t.general}
Let $\a \in \mathbb{R}_+^V$ and let $\b$ be an $r$-tuple of PDS functions on $2^V$. Let $\ba = \min_{ V} \{a_v\}$,  $\bb =  \min_{i \in [r]} \{b^i(V)\}$.
Then $\cx_{\a,\b,k}$ is $r \left\lfloor \frac{\bk}{\ba \bb} \right\rfloor$-collapsible, where $\bk$
%is the largest $\vab(W)$ attained in $\cx_{\a,\b,k}$.
is given by
\[ \bk:= \max\{ \vab(W): W \in \cx_{\a,\b,k} \} < k.
\]
\end{theorem}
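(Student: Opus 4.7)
The proof of Theorem \ref{t.general} proceeds by induction on the number of faces of $\cx_{\a,\b,k}$, with the empty complex as trivial base case. At each step I exhibit a single collapsor $\sigma$ of size at most $d := r\lfloor\bk/(\ba\bb)\rfloor$; after collapsing it out, the resulting subcomplex $\cx' \subseteq \cx_{\a,\b,k}$ has $\bk_{\cx'} \leq \bk$, so its $d$-collapsibility follows from the inductive hypothesis.

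To locate $\sigma$, I fix a maximal face $W_0 \in \cx_{\a,\b,k}$ achieving $\vab(W_0) = \bk$; such a face exists by monotonicity of $\vab$ in its argument. LP duality supplies a dual $h^*$ with $\tab(W_0) = \bk$, supported (without loss of generality) on pairs $(i, A)$ with $A \subseteq W_0$; the chain uncrossing lemma (Lemma \ref{chain}), using submodularity of $\rk_{\M_i}$ and the PDS property of $\b^i$, then lets me assume that for each $i \in [r]$ the sets $\{A : h^*(i, A) > 0\}$ form a chain $A_{i,1} \subsetneq \cdots \subsetneq A_{i, n_i}$ whose $\M_i$-ranks increase strictly. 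I take a transversal $\sigma := \{v_{i,j} : v_{i,j} \in A_{i,j} \setminus A_{i,j-1}\}$; strictness of the chain forces each difference to be nonempty, so this is well defined, $\sigma \subseteq W_0$, and $|\sigma| \leq \sum_i n_i$.

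The bound $|\sigma| \leq d$ follows from the chain of inequalities
\[
\bb \cdot n_i \;\le\; b^i(A_{i,n_i}) \rk_{\M_i}(A_{i,n_i}) \;=\; \vx_0[A_{i,n_i}] \;\le\; |\vx_0| \;\le\; \bk/\ba,
\]
where $\vx_0$ denotes an optimal primal supported on $W_0$ with $\a \cdot \vx_0 = \bk$: the first inequality uses $b^i(A_{i,n_i}) \geq \bb$ and $\rk_{\M_i}(A_{i,n_i}) \geq n_i$ (the latter from strict rank increase along the chain), the equality is complementary slackness, and the last uses $a_v \geq \ba$ pointwise. Integrality of $n_i$ then gives $n_i \leq \lfloor \bk/(\ba\bb)\rfloor$ and hence $|\sigma| \leq d$.

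The main obstacle is verifying that $\sigma$ is actually a collapsor; since $W_0$ is already a maximal face of $\cx_{\a,\b,k}$ containing $\sigma$, this reduces to showing that $\vab(\sigma \cup \{v\}) \geq k$ for every $v \notin W_0$. Maximality of $W_0$ yields $g_v \in \bigcap_i P_{\b^i}(\M_i)$ with support in $W_0 \cup \{v\}$ and $\a \cdot g_v \geq k$, and the task is to reroute $g_v$ onto $\sigma \cup \{v\}$ without sacrificing total weight. I expect this rerouting to be the technical heart of the proof, invoking the remaining matroidal tools of the paper (Lemma \ref{l.unionclosed} and Theorem \ref{l.inequality}) to transfer weight from $W_0 \setminus \sigma$ back onto $\sigma$ while preserving feasibility in all $P_{\b^i}(\M_i)$ simultaneously — in effect, generalizing the hypergraph perturbation of \cite{AHJ} to the intersection of $r$ matroids.
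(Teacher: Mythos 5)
Your plan breaks at the step you yourself flag as open, and the gap is not just technical. You choose as collapsor a transversal $\sigma$ of dual chains inside a maximal face $W_0$, so for $\sigma$ to be contained in a unique maximal face you must show $\vab(\sigma\cup\{v\})\geq k$ for \emph{every} $v\notin W_0$. There is no rerouting argument that can deliver this: the constraints $f(u)\leq b^i(\{u\})\rk_{\M_i}(\{u\})$ already cap the value of any function supported on the small set $\sigma\cup\{v\}$ (of size at most $r\lfloor\bk/(\ba\bb)\rfloor+1$), so in general a tiny subset of a large facet plus one outside vertex cannot carry weight $k$, and $\sigma$ will lie in many facets. The paper avoids this entirely: its collapsor is an \emph{inclusion-minimal} face $W$ attaining the maximum value $\bk$, and the unique-facet property $(\diamondsuit)$ is obtained not by blocking outside vertices but by genericity --- Lemma \ref{l.manypds} allows perturbing $\b$ so that the dual optimum defining $\tab$ is unique for every $W$, whence the single dual certificate for $W$ also certifies $W\cup W^+$, which is the (possibly much larger) unique facet containing $W$. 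Relatedly, your size bound leans on a strict increase of ranks along the uncrossed dual chains, which you cannot guarantee for a maximal $W_0$: the optimal primal there may vanish on parts of $W_0$, so nested tight sets can have equal values of $b^i\cdot\rk_{\M_i}$. The paper gets strictness by working with the minimal $W$ (where an optimal primal vertex $f$ is strictly positive on all of $W$, else minimality fails) and by counting $|W|$ linearly independent tight constraints at that vertex, then applying Theorem \ref{l.inequality}, Lemma \ref{l.unionclosed} and Lemma \ref{chain} to the \emph{primal} tight-set families $\F^f_i$, not to the dual support.

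The second gap is the induction itself. You collapse out the star of $\sigma$ and invoke ``the inductive hypothesis'' on $\cx'$, but the statement being proved applies only to complexes of the form $\cx_{\a,\b,k}$; a subcomplex obtained by deleting all faces containing $\sigma$ is not obviously of that form, so there is nothing to induct on. This is exactly what the paper's step $(\spadesuit)$ supplies: after removing all supersets of the minimal $W$, the remaining complex equals $\cx_{\a',\b,\bk}$ for the perturbed objective $a'_v=a_v-\epsilon$ off $W$, and the verification uses precisely the minimality of $W$ (the optimizer for $W$ is supported inside $W$, and any surviving face meeting the deleted star would contradict maximality of $\bk$ or minimality of $W$). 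Without an analogue of this re-representation step --- which your choice of a maximal $W_0$ and a transversal $\sigma$ does not seem to admit --- the induction does not close, independently of the collapsor issue above.
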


Theorem \ref{t.specific} is the special case of Theorem \ref{t.general} obtained by fixing every $b^i(A)=1$ and $\a=\one$. Theorem \ref{t.general} applies since the constant-1 function is  PDS.
Here, $\cx_k = \cx_{\a,\b,k}$,
%(as $\nu^*_{\a,\b}$ agrees with $\nu^*$)
$\ba=\bb=1$,
and $\left\lfloor \bk \right\rfloor \leq \left\lceil k \right\rceil -1$,
yielding that $\cx_k$ is $(r \left\lceil k \right\rceil-r)$-collapsible. 

%It is also worth noting that 

\vspace{1mm}

We prove Theorem \ref{t.general} by induction on $|\cx_{\a,\b,k}|$. Note  that  $|\cx_{\a,\b,k}| > 1$ , since $\cx_{\a,\b,k}$ contains at least one nonempty set.

Following a crucial idea from \cite{AHJ}, we may assume that generically, for every $W \subseteq V$  there is a unique function $h$ on $[r] \times 2^V$ attaining the minimum in the program defining $\tau^*_{\a,\b}(W)$. For, the set of all $\b=(\b^1, \dots, \b^r)$ for which the optimum is not uniquely attained is the  union of finitely many hyperplanes. By Lemma \ref{l.manypds}, it is possible to perturb the $\b^i$'s so as to avoid these hyperplanes, in a fashion sustaining the value of $\bb$. If the perturbation is sufficiently small, $\cx_{\a,\b,k}$ stays unaffected.

Now, we choose any $W \in \cx_{\a,\b, k}$ such that:
\[
\nu_{\a,\b}^*(W) = \bk, 
\text{ and } W \text{ is inclusion-minimal among all such sets.}
\tag{$\dagger$}
\]

We prove that removing all supersets of $W$ is an elementary $r\left\lfloor \frac{\bk}{\ba \bb} \right\rfloor$-collapse in $\cx_{\a,\b,k}$. This requires the three claims $(\diamondsuit), (\clubsuit)$, and $(\spadesuit)$ as follows, which together will constitute the remainder of the proof of Theorem \ref{t.general}.
\[
 W \text{ is contained in a unique facet. }
 \tag{$\diamondsuit$}
 \]

To prove $(\diamondsuit)$, we follow \cite{AHJ}, but reproduce the argument for completeness. Let $W^+:= \{ v \in V: W \cup \{v\} \in  \cx_{\a,\b,k} \}$. Let $v \in W^+$ be arbitrary. By maximality of $\bk$, we know $\nu^*_{\a,\b} (W \cup \{v\}) = \vab(W) = \bk$, and hence   
$\tab(W \cup\{v\}) = \tab(W)=\bk$. By our assumed perturbations, there exists a unique function $h$ on $[r] \times 2^V$ attaining the minimum defining $\tab(W)$.
Since the function $h'$ witnessing $\tab(W \cup \{v\})=\bk$ is also feasible for $\tab(W)$, it follows that $h'=h$, so $h$ must satisfy the additional constraint $\sum_{i \in [r], A \ni v} h(i,A) \geq a_v$ for  $v$. Since this is true for every $v \in W^+$, the function  $h$ satisfies the constraints for all vertices in $W \cup W^+$, witnessing $\tab(W \cup W^+) = \bk$. Thus $W \cup W^+ \in \cx_{\a,\b,k}$ is the unique facet containing $W$, giving $(\diamondsuit)$.

\[
\text{ If } W \text{ satisfies } (\dagger) \text{ then }  |W| \leq r\left\lfloor \frac{\bk}{\ba \bb} \right\rfloor.
 \tag{$\clubsuit$}
\]

The proof of $(\clubsuit)$ is the main place where new arguments are needed, beyond those appearing in  \cite{AHJ}. These appear in Lemma \ref{chain}, Theorem \ref{l.inequality} and Lemma  \ref{l.unionclosed} below. 
Let $P_W$ be the polytope of functions $f$ on $\R^W$ satisfying $f(v) \geq 0$ for all $v \in V$, and $\sum_{v \in A} f(v) \leq b^i(A)\rk_{\M_i} (A)$ for all $i \in [r]$ and $A \subseteq V$.
Let  $f$ be a vertex of $P_W$ at which the maximum value of $\sum_{v \in W} a_v f(v)$ is attained. This maximum is at least $\bk$. Then
$f$ must satisfy $|W|$ linearly independent inequalities of the above kinds at equality. If $f(v)=0$ were true for any $v$, then $f$ would also witness $\vab(W \setminus \{v\})=\vab(W) = \bk$, contradicting minimality of $W$. So all $|W|$ equalities are of the form 
$\sum_{v \in A} f(v) = b^i(A)\rk_{\M_i} (A)$. For each $i \in [r]$ let $w_i$ be the number of equalities of the form
$\sum_{A} f(v) = b^i(A)\rk_{\M_i} (A)$  (so $\sum_i w_i=|W|$). 

% We characterise them accordingly: for each $i \in [r]$, define 
%\[
%\A_i:=\bigg\{A \subseteq W: \sum_{v \in A} f(v) \overset{*}{=} \rk_{\M_i} (A)\bigg\}.
%\]
Let
%(not just the linearly independent ones),
\[
\F^f_i:= \Big\{A \subseteq V: \sum_{v \in A} f(v) = b^i(A)\rk_{\M_i} (A) \Big\},
\]
so the set $\{\chi_A \mid A \in \F^f_i \}$ consists of  $w_i$ linearly independent vectors.

We can take advantage of these $w_i$ sets as follows.  Recall that $\chi_S$ denotes the indicator vector of $S$. We use the term ``\emph{chain} of length $r$ of sets'' for a collection of $r$ distinct non-empty sets, totally ordered by inclusion.

\begin{lemma}\label{chain}
Let $\cf \subseteq 2^{[n]}$ be a  family of sets, closed under intersections and unions.
If  $\{ \chi_S: S \in \cf \}$ linearly spans (over the reals or rationals) a space of dimension $t$, then $\cf$ contains a chain of length $t$.
\end{lemma}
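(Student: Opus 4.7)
The plan is to fix an inclusion-maximal chain of nonempty sets
\[
S_1 \subsetneq S_2 \subsetneq \cdots \subsetneq S_\ell
\]
in $\cf$ (maximal in the sense that no nonempty set of $\cf$ can be inserted at any position), and to show that the indicator vectors $\chi_{S_1}, \ldots, \chi_{S_\ell}$ already span $\text{span}\{\chi_S : S \in \cf\}$. Since a strictly nested sequence has linearly independent indicators, this spanning forces $\ell = t$ and yields the required chain. Two simple consequences of maximality drive the argument. First, $S_\ell = \bigcup \cf$: the right-hand side lies in $\cf$ by union-closedness, and if $S_\ell \subsetneq \bigcup \cf$ the chain could be extended. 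Second, no nonempty element of $\cf$ is strictly contained in $S_1$, since it could otherwise be prepended.

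The heart of the argument is the following inductive spanning claim: for each $i = 1, \ldots, \ell$, every $T \in \cf$ with $T \subseteq S_i$ satisfies $\chi_T \in \text{span}\{\chi_{S_1}, \ldots, \chi_{S_i}\}$. The base case $i = 1$ is immediate from the second observation above, since such a $T$ is either $\emptyset$ or $S_1$. For the inductive step, take $T \in \cf$ with $T \subseteq S_i$; if $T \subseteq S_{i-1}$ we are done, so assume $T \not\subseteq S_{i-1}$. Then $T \cup S_{i-1} \in \cf$ by union-closedness and satisfies $S_{i-1} \subsetneq T \cup S_{i-1} \subseteq S_i$, so maximality of the chain forces $T \cup S_{i-1} = S_i$. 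The modular identity
\[
\chi_T + \chi_{S_{i-1}} = \chi_{T \cup S_{i-1}} + \chi_{T \cap S_{i-1}} = \chi_{S_i} + \chi_{T \cap S_{i-1}},
\]
rewritten as $\chi_T = \chi_{S_i} - \chi_{S_{i-1}} + \chi_{T \cap S_{i-1}}$, reduces the claim for $T$ to the same claim for $T \cap S_{i-1}$. Intersection-closedness puts this set in $\cf$, and it lies below $S_{i-1}$, so the inductive hypothesis applies.

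Applied at $i = \ell$ together with $S_\ell = \bigcup \cf$, the claim covers every $T \in \cf$, giving $t \leq \ell$; the trivial inequality $\ell \leq t$ from linear independence of the chain indicators then closes the loop. The main subtlety, and the step that genuinely uses both hypotheses on $\cf$, is the forced jump $T \cup S_{i-1} = S_i$: union-closedness is what allows maximality of the chain to be invoked on $T \cup S_{i-1}$, and intersection-closedness is what keeps $T \cap S_{i-1}$ inside $\cf$ so that the inductive hypothesis can be applied to it. Everything else is bookkeeping around a single application of the modular identity.
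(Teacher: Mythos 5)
Your proof is correct; the reduction $\chi_T = \chi_{S_i} - \chi_{S_{i-1}} + \chi_{T \cap S_{i-1}}$ is valid, the forced equality $T \cup S_{i-1} = S_i$ does follow from maximality of the chain together with union-closedness, and the base case and the identification $S_\ell = \bigcup \cf$ are handled properly (finiteness of $\cf \subseteq 2^{[n]}$ gives both the existence of a maximal chain and the membership $\bigcup\cf \in \cf$). The route is genuinely different from the paper's, though the two arguments are in a sense contrapositives of one another. The paper inducts on the dimension $t$: given a chain of length $t-1$ and a set $A$ with $\chi_A$ outside the span of the chain, it shows the chain can be extended at the top (by $A_{t-1}\cup A$) or refined in the middle (by $(A \cup A_{i-1}) \cap A_i$), the punchline being that if neither is possible then $A$ decomposes as a disjoint union of the blocks $B_i = A_i \setminus A_{i-1}$ and so $\chi_A$ lies in the span after all — i.e., ``non-spanning implies extendable.'' You instead prove directly that a maximal chain spans, by an induction along the chain driven by the modular identity — i.e., ``maximal implies spanning.'' Your version avoids the block decomposition and the attendant case analysis, and it isolates more cleanly the underlying lattice-theoretic fact (a maximal chain in a finite distributive lattice of sets has length equal to the dimension spanned by the indicators); the paper's version has the mild advantage of exhibiting explicitly how to lengthen a given short chain. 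Either proof serves the application equally well.
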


\begin{proof}
We proceed by induction on $t$.
It is obvious when $t = 1$.
For $t \geq 2$, we may assume that there exists a chain $\emptyset \neq A_1 \subsetneq \dots \subsetneq A_{t-1}$ of length $t-1$.
Since $\{ \chi_S: S \in \cf \}$ spans a $t$-dimensional space, there exists a non-empty set $A \in \cf$ such that $\chi_A \not\in U:= \text{span}(\{ \chi_{A_i}: i<t \})$.
%Clearly $A \neq \emptyset$.
If $A \not\subseteq A_{t-1}$, then letting $A_t=A_{t-1} \cup A$ yields the desired chain of length $t$.  
Thus we may assume $A \subseteq A_{t-1}$.

For $i=2, \dots, t$ let $B_i = A_i \setminus A_{i-1}$ and let $B_1 = A_1$.
%Setting $A_0= \emptyset$ defines  $B_1$ as $A_1$. 
Note that $\chi_{B_i} = \chi_{A_i}-\chi_{A_{i-1}} \in \text{span}(\{ \chi_{A_i}: i<t \})$.
%\begin{equation}\label{chii}
%\chi_{B_i} = \chi_{A_i}-\chi_{A_{i-1}}~~~ (i <t)
%\end{equation}
If for some $i \leq t$ neither $B_i \cap A = \emptyset$ nor $B_i \subseteq A$, then $(A \cup A_{i-1}) \cap A_i \in \cf$ lies strictly between $A_{i-1}$ and $A_i$, so its addition forms the desired chain. We may thus assume that there is no such $B_i$.

Let $S=\{i \leq t : B_i \subseteq A\}$. By the above assumption $A = \bigsqcup\limits_{i \in S} B_i$. Hence
$\chi_A = \sum_{i \in S} \chi_{B_i} \in U$, a contradiction.
\end{proof}

We wish to show that each $\F^f_i$ 
satisfies the condition of Lemma \ref{chain}, namely it is closed under intersections and unions. Indeed, for the usual matroid polytopes, it is a well-known fact (see Lemma \ref{l.unionclosed} below).
Extending this to skew polytopes first requires the following result.
\begin{theorem}\label{l.inequality}
If $\c,\r$ are nonnegative submodular functions on a lattice of sets, $\c$ is decreasing and $\r$ is increasing, then  $\c \cdot \r$ is submodular.
\end{theorem}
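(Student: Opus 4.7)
The plan is to directly manipulate the submodularity inequality for $\mathbf{c}\cdot\mathbf{r}$, peeling off one of the four terms in a way that lets us apply submodularity of $\mathbf{r}$ first, then monotonicity properties, then submodularity of $\mathbf{c}$. Write $U=A\cup B$ and $I=A\cap B$. The goal is to show
\[
c(A)r(A)+c(B)r(B)-c(U)r(U)-c(I)r(I)\geq 0.
\]

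First I would add and subtract $c(U)r(A)+c(U)r(B)$ to rewrite the left side as
\[
(c(A)-c(U))r(A)+(c(B)-c(U))r(B)+c(U)\bigl(r(A)+r(B)-r(U)\bigr)-c(I)r(I).
\]
Since $\mathbf{c}$ is decreasing, the coefficients $c(A)-c(U)$ and $c(B)-c(U)$ are nonnegative; since $\mathbf{r}$ is submodular and $c(U)\geq 0$, the middle term is bounded below by $c(U)r(I)$. Substituting yields
\[
(c(A)-c(U))r(A)+(c(B)-c(U))r(B)-(c(I)-c(U))r(I).
\]

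Now the key step: since $\mathbf{r}$ is increasing, $r(A),r(B)\geq r(I)$, and since $c(A)-c(U)$ and $c(B)-c(U)$ are nonnegative, I can lower bound $r(A)$ and $r(B)$ by $r(I)$ in the two remaining positive terms. This gives
\[
r(I)\bigl[(c(A)-c(U))+(c(B)-c(U))-(c(I)-c(U))\bigr]=r(I)\bigl[c(A)+c(B)-c(U)-c(I)\bigr],
\]
which is nonnegative by submodularity of $\mathbf{c}$ together with $r(I)\geq 0$. Chaining these inequalities delivers the result.

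The only potential subtlety is ensuring all the numbers whose signs we invoke (namely $c(U)$, $r(I)$, $c(A)-c(U)$, $c(B)-c(U)$, and $c(I)-c(U)$) have the correct signs; these follow immediately from nonnegativity of $\mathbf{c}$ and $\mathbf{r}$ combined with the monotonicity hypotheses, so no separate case analysis is required. I do not expect any real obstacle here—this is a short algebraic manipulation once the right decomposition is chosen, and the rearrangement above seems essentially forced by which variables we have monotonicity control over.
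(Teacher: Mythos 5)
Your proof is correct: every inequality in the chain checks out (the signs of $c(A)-c(U)$, $c(B)-c(U)$, $c(U)$, and $r(I)$ are exactly as you claim), and the decomposition you arrive at is the same one the paper obtains, just derived by direct add-and-subtract manipulation rather than via the paper's discrete product rule for the difference operator $D_Th(S)=h(S\cup T)-h(S)$. In effect your successive lower bounds discard, one at a time, the four signed terms of the paper's identity, so this is essentially the same argument in a more elementary packaging.
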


This may be folklore, and it closely resembles a standard fact on the product of convex functions (see e.g.\ \cite[3.32]{BL}), but Lov\'asz's celebrated method \cite{Lo} for linearly extending submodularity to convexity does not behave well under taking products, and so we could not establish a direct implication. The only explicit reference we found is a question answered at \cite{stackexchange}.
For completeness we provide a proof here.
\begin{proof}

We wish to show that, for any $A,B \subseteq V$,
\[
c(A \cup B) r(A \cup B) + c(A \cap B) r(A \cap B) - c(A) r(A) - c(B) r(B) \leq 0.
\]

For a real-valued function $h$ on a lattice,
let
$D_{T}h(S):=h(S \cup T) - h(S)$ be  the ``difference'' operator applied to $h$.
In this terminology, a function $h$ is submodular if and only if
\[
D_{B \setminus A} D_{A \setminus B} (h) (A \cap B) \leq 0.
\]
We shall show that $D_S D_R (cr)$ is non-positive for any sets $S,R$. To see this, write:
\begin{align*}
c(S \cup R)r(S \cup R) -c(S)r(S) &=c(S \cup R) (r(S \cup R) -r(S)) \\ &+ (c(S\cup R)-c(S))r(S)    
\end{align*}
gives us the product rule
$D_R(cr)(S)=c(S \cup R)D_Rr(S) + (D_R c(S))(r(S))$. Letting $T_R h(X)$ denote $h(X \cup R)$ for any $h$, this says
\[
D_R(cr)= (T_R c) (D_Rr) + (D_R c)r.
\]
 Applying this  twice gives 
\begin{align*}
    D_S D_R(cr) = &
    D_S \big( (T_R c )( D_R r) \big)
    +  D_S \big( (D_R c)(r)
    \big) \\
    = &
    T_S T_R c \cdot D_S D_R r
    + \big( D_S T_R c \big) \cdot
%    \big(
    D_Rr 
%    \big)
     \\
    & + \big(
    D_R T_S c 
    \big) \cdot 
    D_S r +
    \big( D_S D_R c \big) \cdot r.
\end{align*}
All four products above are non-positive, as can be seen from the following:
\begin{itemize}
    \item $c,r \geq 0$ by nonnegativity,
    
    \smallskip 
    
    \item $D_R D_T r, D_R D_T c \leq 0$ by submodularity,
    
    \smallskip 
    
    \item $D_R c, D_T c \leq 0$ as $\c$ decreasing,
    
    \smallskip 
    
    \item $D_T r, D_R r \geq 0$ as $\r$ increasing.
\end{itemize}
\end{proof}

\begin{lemma}\label{l.unionclosed}
Let $\M$ be a matroid on $V$, $\c$ a PDS function on $2^V$, $f$ a point in $P_\c(\M)$, and $W$ a subset of $V$. Let $\cf$
%$=\cf_f(\M, W)$
be the family of all subsets $A$ of $W$ satisfying 

\begin{equation} \label{cr} \sum_{v \in A} f(v) = c(A) \rk (A). \end{equation} 

Then $\cf$ is closed under intersections and unions.
\end{lemma}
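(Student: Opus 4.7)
\bigskip

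\textbf{Proof plan for Lemma \ref{l.unionclosed}.} The plan is to imitate the standard argument that the tight sets of a point in the matroid polytope form a sublattice, but with $\rk$ replaced by $c \cdot \rk$. The role of submodularity of $\rk$ will be played by submodularity of $c \cdot \rk$, which is exactly what Theorem \ref{l.inequality} gives us once we notice that $\rk_\M$ is a nonnegative, increasing, submodular function on $2^V$, while $c$ is nonnegative, decreasing, and submodular (being PDS).

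First I would fix $A, B \in \F$ and add the two defining equalities
\[
\sum_{v \in A} f(v) + \sum_{v \in B} f(v) = c(A)\rk(A) + c(B)\rk(B).
\]
Because $f \geq 0$, the left-hand side equals $\sum_{v \in A \cup B} f(v) + \sum_{v \in A \cap B} f(v)$, and by the definition of $P_\c(\M)$ each of these two sums is bounded above by $c(A\cup B)\rk(A \cup B)$ and $c(A \cap B)\rk(A \cap B)$ respectively. Hence
\[
c(A)\rk(A) + c(B)\rk(B) \leq c(A \cup B)\rk(A \cup B) + c(A \cap B)\rk(A \cap B).
\]

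Next I would apply Theorem \ref{l.inequality} to $c$ and $\rk_\M$ to obtain the reverse inequality, namely that $c \cdot \rk$ is submodular. Combining the two inequalities forces equality, and, since all the inequalities $\sum_{v \in S} f(v) \leq c(S)\rk(S)$ for $S \in \{A \cup B,\, A \cap B\}$ are valid with sum at the maximum, each must hold with equality individually. As $A \cup B$ and $A \cap B$ are automatically subsets of $W$, this shows both lie in $\F$.

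The only potentially delicate point is verifying the hypotheses of Theorem \ref{l.inequality}: we need $\rk_\M$ to be nonnegative, increasing, and submodular on the Boolean lattice $2^V$, which are standard matroid facts, and we need $c$ to be nonnegative, decreasing, and submodular, which is exactly the PDS assumption. Once these are checked, the argument is purely a squeeze between the feasibility inequalities from $P_\c(\M)$ and the submodularity of $c \cdot \rk$, so no further obstacle should arise.
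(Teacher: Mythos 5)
Your proposal is correct and follows essentially the same route as the paper: both squeeze the feasibility inequalities $\sum_{v \in S} f(v) \leq c(S)\rk(S)$ for $S = A \cup B, A \cap B$ against the submodularity of $c \cdot \rk$ supplied by Theorem \ref{l.inequality}, forcing equality everywhere. (The paper phrases it as a single chain of inequalities beginning and ending with $\sum_{v \in A \cup B} f(v)$, while you add the two tight constraints and compare, but this is only a cosmetic difference; note also that the identity $\sum_{v\in A} f(v) + \sum_{v\in B} f(v) = \sum_{v\in A\cup B} f(v) + \sum_{v\in A\cap B} f(v)$ needs no nonnegativity of $f$.)
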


\begin{proof}
Let $A,B \in \cf$, so $\sum_{A} f(v) = c(A) \rk (A)$
and $ \sum_{B} f(v) = c(B) \rk (B)$.
Then
\begin{align*}
\sum_{v \in A \cup B} f(v)& \leq c(A \cup B)\rk (A \cup B) \\
&\leq c(A) \rk (A) + c(B) \rk (B) - c(A \cap B)\rk (A \cap B) \\
& = \sum_{v \in A} f(v) + \sum_{v \in B} f(v) -
c(A \cap B)\rk (A \cap B) \\
& \leq \sum_{v \in A} f(v) + \sum_{v \in B} f(v) -\sum_{v \in A \cap B} f(v) \\
& = \sum_{v \in A \cup B} f(v).
\end{align*}
The second inequality is  the submodularity of $\c \cdot \rk$.
The first and last inequalities follow from  the fact that $f \in P_\c(\M)$.
Since equality should hold throughout, it follows that $A \cup B, A \cap B \in \cf$.
\end{proof}

Lemma \ref{l.unionclosed} enables  application of Lemma \ref{chain} to $\cf:=\F^f_i$. We obtain a chain $\emptyset \neq A_1 \subsetneq A_2 \subsetneq \dots \subsetneq A_{w_i}$ in $\F^f_i$.
Thus
\[
0 < \sum_{v \in A_1} f(v) < \sum_{v \in A_2} f(v) < 
\dots < \sum_{v \in A_{w_i}} f(v) 
\]
as $f(v)>0$ for each $v \in W$.
We may rewrite this as 
\[
0<b^i(A_1)\rk_{\M_i}(A_{1})<
% \rk_{\M_i}(A_{2})<
\dots <
b^i(A_{w_i})\rk_{\M_i}(A_{w_i}).
\]
But $\b^i$ is decreasing, so $b^i(A_1) \geq b^i(A_2) \geq \dots \geq b^i(A_{w_i})$. 
Thus 
\[
0<\rk_{\M_i}(A_{1})<
 \rk_{\M_i}(A_{2})<
\dots <
\rk_{\M_i}(A_{w_i}),
\]

Since  ranks are integers, it follows that  $\rk_{\M_i}(A_{w_i}) \geq w_i$. 

Thus in fact, for each $i \in [r]$:
\[
\ba \bb w_i \leq \ba b^i(A_{w_i}) \rk_{\M_i}(A_{w_i})
=  \ba \sum_{v \in A_{w_i}} f(v)
\leq \sum_{v \in W} a_v f(v)
= \bk,
%< k,
\]
and by integrality $w_i \leq \left\lfloor \frac{\bk}{\ba \bb} \right\rfloor$.
So we conclude
\[
|W| = \sum_{i \in [r]} w_i \leq \sum_{i \in [r]}
\left\lfloor \frac{\bk}{\ba \bb} \right\rfloor
= r\left\lfloor \frac{\bk}{\ba \bb} \right\rfloor,
\]
which proves $(\clubsuit)$.

$(\spadesuit)$ Suppose $W$ satisfies $(\dagger)$ and let $\cx'$ be the complex obtained by removing from $\cx_{\a,\b,k}$ all faces containing $W$. Then   there exists  $\a' \in \mathbb{R}_+^V$, satisfying 
$r\left\lfloor \frac{\bk}{\ba' \bb} \right\rfloor
\leq r\left\lfloor \frac{k}{\ba \bb} \right\rfloor$, for which
\[
\cx' = \{W' \subseteq V: \nu^*_{\a',\b} (W') < \bk \} = \cx_{\a',\b,\bk}.
\]

The proof of  $(\spadesuit)$  follows a parallel argument in \cite{AHJ}. We claim that there is some $\epsilon > 0$ for which $\cx' = \cx_{\a',\b,\bk}$ is satisfied by the objective coefficients $\a'$ defined coordinate-wise by:

\[
a'_v := \left\lbrace
\begin{array}{lc}
    a_v - \epsilon & \text{ if } v \not\in W, \\
    a_v & \text{ if } v \in W.
\end{array}
\right.
\]
%We now need to show that if $\nu^*_{\a'}(W') < k'$, then $W'$ was not removed upon collapsing $W$, and that if $\nu^*_{\a'}(W') \geq k'$, then $W' \supset W$.

First consider any $W' \subseteq V$ that wasn't even in $\cx_{\a,\b, k}$ to begin with, so that $\vab (W') \geq k$.
The feasibility regions for $\vab (W')$ and $\nu^*_{\a',\b} (W')$ are the same, so if $\epsilon$ is sufficiently small relative to $k-\bk$, it follows $\nu^*_{\a',\b} (W') \geq \bk$, so that $W' \not\in \cx_{\a', \b, \bk}$ either.

Next, pick any $W' \subseteq V$ previously in $\cx_{\a,\b,k}$, but which contained $W$ so was removed in the collapse. As before, let $f$ be an optimiser for the LP defining $\vab(W)$, so $\a \cdot f = \bk$ but also $\supp(f) = W \subseteq W'$. This way, $f$ is also feasible for the linear program defining $\vab(W')$. But whenever $a'_v < a_v$, $e \not\in W$ and hence $ f(v)=0$ by minimality of $W$. Hence 
$\nu^*_{\a',\b}(W') \geq \a' \cdot f = \a \cdot f  = \vab(W') = \bk$. Thus $W' \not\in \cx_{\a', \b, \bk}$.

Finally, take some $W' \subseteq V$ previously in $\cx_{\a,\b,k}$ and not fully containing $W$. Note $W \cap W' \subsetneq W$.
We wish to show $\nu^*_{\a',\b}(W') < \bk$ for deducing $W' \in \cx_{\a',\b,\bk}$, so assume for contradiction $\nu^*_{\a',\b}(W') \geq \bk$, as witnessed by some
$g \in \bigcap P_{\b^i}(\M_i), \supp(g) \subseteq W'$ with $\a' \cdot g \geq \bk$.
%witnesses $W \in \cx_{\a',\b,k}$.
We cannot have $\supp(g) \subseteq W \cap W'$. For otherwise
%In this way,
$g$ would also witness $\nu^*_{\a,\b}(W \cap W') 
\geq \a' \cdot g
= \a \cdot g
\geq \bk$, hence $\vab(W \cap W') =\bk$ by maximality of $\bk$, and this would contradict inclusion-minimality of $W$.
%attaining $\vab(W)=\bk$.
So there is at least one $e_0\in \supp(g) \setminus W$. So $g(v_0)>0$ and $a_{v_0}' < a_{v_0}$ means
$\sum_{v\in W'} a_v g(v)> \sum_{v \in W'} a'_v g(v)\geq \bk$, still contradicting maximality of $\bk$.
%(Recall that the feasibility regions for $\vab(W')$ and $\v_{\a',\b}^*$ are the same.

So, by inductive hypothesis, $\cx_{\a',\b,\bk}$ is indeed $r\left\lfloor \frac{\bk}{\ba' \bb} \right\rfloor$-collapsible, and since $\bk<k$, we can make $\epsilon$ small enough to guarantee $r\left\lfloor \frac{\bk}{\ba' \bb} \right\rfloor
\leq r\left\lfloor \frac{k}{\ba \bb} \right\rfloor$.

\section{Closing Remarks}

Theorem \ref{t.main} provides a matroidal generalisation of Theorem \ref{t.hypermatchings}. While the proof method goes via a weighted version, Theorem \ref{t.general}, this does not seem to also generalise the corresponding theorem for weighted fractional matchings (see \cite[Theorem 3.2]{AHJ} for the non-$r$-partite version).

Indeed, suppose we are given an $r$-partite hypergraph $H$ with parts $V=V_1 \cup \dots \cup V_r$, along with vertex weights $\{b_v:v \in V\}$.
We wish to define a collection of polytopes $\{P_i: i \in [r]\}$ with the following property. A weighted collection of edges $\{x_e :e \in E(H)\}$ is a fractional matching with respect to the $\{b_v\}$'s if and only if $x \in \bigcap_{i \in [r]} P_i$. To do so in a way that would generalise the usual (non-weighted) case would suggest we let
\[
P_i := \{x \in \R_+^{E(H)}: \forall A \subset E(H), x[A] \leq b^i(A) |(\cup A) \cap V_i|\},
\]
for some $b^i:2^{E(H)} \rightarrow \R_+$ satisfying $b^i(N(v)) = b_v$ for every $v \in V_i$ (where $N(v)$ denotes all edges of $H$ incident to $v$). This can be done by letting $b^i(A):=\max \{ b_v: v \in  (\cup A) \cap V_i \}$-this way, all inequalities not of the form $A=N(v)$ for some $v \in V_i$ are redundant.
But while these $b^i$'s are submodular, they are not decreasing, and hence Theorem \ref{t.general} does not apply.

It is therefore natural to ask what is the largest family of functions for which Theorem \ref{t.general} holds.

\medskip
\section*{Acknowledgements}
%\textbf{Acknowledgements.} 
We are indebted to Ron Aharoni for introducing us to this problem and guiding helpful discussions.

\end{document}